\newcommand{\beq}{\begin{eqnarray*}}
\newcommand{\feq}{\end{eqnarray*}}
\newcommand{\beqn}{\begin{eqnarray}}
\newcommand{\feqn}{\end{eqnarray}}
\newtheorem{theorem}{Theorem}[section]
\newtheorem{lemma}[theorem]{Lemma}
\theoremstyle{definition}
\theoremstyle{remark}
\numberwithin{equation}{section}
\newtheorem*{theorem*}{Theorem}
\begin{document}
\title[Shock formation in traffic flow models]{Shock formation in traffic flow models with nonlocal look ahead and behind flux}


\author{Yi Hu}
\address{Department of Mathematical Sciences, Georgia Southern University, Statesboro, Georgia, 30460}
\email{yihu@georgiasouthern.edu}
\author{Yongki Lee}
\email{yongkilee@georgiasouthern.edu}

\author{Shijun Zheng}
\email{szheng@georgiasouthern.edu}
\keywords{nonlocal conservation law, wave-breaking, blow-up, critical threshold, traffic flow, look-ahead dynamics}
\subjclass{Primary, 35L65; Secondary, 35L67} 
\begin{abstract} 
In this work, we study a Lighthill-\textcolor{black}{Whitham}-Richard (LWR) type traffic flow model with a non-local flux. We identify a threshold condition for shock formation for traffic flow models with Arrhenius look-ahead-behind (i.e., nudging) dynamics with concave-convex flux.

\end{abstract}

\maketitle

\section{Introduction}


The aim of the present work is to investigate shock formation condition for the following Lighthill-\textcolor{black}{Whitham}-Richard (LWR) type \cite{LW55, R56} traffic flow model with a non-local flux,
\begin{equation}\label{trafficAB}
\left\{
  \begin{array}{ll}
    \partial_t u + \partial_x F(u, \bar{u}, \tilde{u}) =0, & t>0, x \in \mathbb{R}, \\
    u(0,x)=u_0 (x), &   x\in \mathbb{R},\hbox{}
  \end{array}
\right.
\end{equation}
where $u$ is the vehicle density normalized in the interval $[0,1]$. Here, the non-local flux $$F(u,\bar{u}, \tilde{u}):=u(1-u)^\gamma e^{-\bar{u}+\tilde{u}}$$ represents the density-flow relationship with $\gamma \geq 1$ (i.e. fundamental diagram, e.g., \cite{LZ11}), weighted by so called Arrehenius type  slow down $e^{-\bar{u}}$  (e.g., \cite{SK06}) and nudging $e^{-\tilde{u}}$  (e.g., \cite{KTP2021, PM2021}) factors. Here, we define the relaxation terms $\bar{u}$ and $\tilde{u}$ as
$$
\bar{u}(t,x):=\frac{1}{\gamma_a}\int^{x+\gamma_a} _{x} K_a u(t,y) \, dy,
$$
and
$$
\tilde{u}(t,x):=\frac{1}{\gamma_b}\int^{x} _{x-\gamma_b} K_b u(t,y) \, dy,
$$
where $\gamma_a$ and $\gamma_b$ are positive constant proportional to the look-ahead and behind distances, respectively. In addition to this, $K_a$ and $K_b$ represent constants proportional to positive interaction strengths.


In this work, we are interested in the shock formation condition of this non-local macroscopic traffic flow model. The shock formation conditions are established for several model equations of the form \eqref{trafficAB}.
  To put our study in a proper perspective we recall several recent works in the form of \eqref{trafficAB}:
   
$\bullet$ Concave-Look Ahead flux: $F=u(1-u)e^{-\bar{u}}$. Several finite time shock formation scenarios of solution were presented in \cite{LL11}. Later in \cite{LL15}, threshold conditions for the finite time shock formation were identified. Some careful numerical study with various look-ahead potentials was carried out in \cite{KP09}. Also, a maximum principal satisfying numerical scheme was studied in \cite{FV2020} and several numerical simulation were presented. Recently, when $\bar{u}=\int^\infty _x u(t,y) \, dy$, the authors in \cite{LT20} studied a sharp critical threshold condition on the initial data which distinguishes the global smooth solutions and finite time break-down. 

$\bullet$ Concave-Look Ahead and Behind flux: $F=u(1-u)^{-\bar{u}+\tilde{u}}$. The look behind intensification, or so called nudging, is intended to take into account the driving behavior of some drivers who actively react to vehicle distributions of one's ahead and behind.  The effect of nudging was studied very recently. In particular, recently, this nudging can increase the flow, and can have a strong stabilizing effect on traffic flow \cite{PM2021, KTP2021}. In \cite{L20}, the threshold condition for shock formation was identified.


$\bullet$ Concave-Convex-Look Ahead flux: $F=u(1-u)^\gamma e^{-\bar{u}}$, $\gamma \geq 1$. In \cite{ST20}, the effect of this ``right-skewed non-concave asymmetry'' in the flux was studied, and numerical schemes were designed. The threshold condition for finite time shock formation was studied when $\gamma=2$ in \cite{L19}.

$\bullet$ Several well-posedness of entropy weak solution results on non-local traffic flows and conservation laws can be found in \cite{AC2015, GS2016, KP2017, KPS2017, SCM20} and references therein.


In this work, we set  $\gamma=2$, and $\gamma_a = \gamma_b = K_a = K_b=1.$ In this case, we note that the global term $-\bar{u}+\tilde{u}$ in the flux can be simplified to $\int^{x+1} _{x-1} u(t,y) \, dy$. But in our proof, we handle $\bar{u}$ and $\tilde{u}$ separately not to exploit this simplification.

Also, to state our main result in this section, we introduce a quantity \textcolor{black}{with which we characterize} the initial behavior of $-\partial^2_{uu} F(u, \bar{u})$:
$$L^{u_0} (x) :=-\partial^2 _{uu}F(u, \bar{u}, \tilde{u})\bigg{|}_{t=0}= (4 -6 u_0 (x))e^{-\bar{u}(0,x)}.$$

The main result is stated as follow:

\begin{theorem}\label{thm_traffic} 
Consider \eqref{trafficAB} with $\gamma=2$, and $\gamma_a = \gamma_b = K_a = K_b=1.$ Suppose that $u_0 \in H^2$ and $0\leq u_0 (x) \leq 1$ for all $x \in \mathbb{R}$. Let $\xi(0):=argmax[u' _0 (x)]$ and $\eta(0):=argmin[u' _0 (x)]$.
If $L^{u_0} (\xi(0))$ and $L^{u_0} (\eta(0))$ are both positive and there exists a constant $\mu$ such that $0<\mu< \min[L^{u_0} (\xi(0)), L^{u_0} (\eta(0))]$, and 
\begin{equation}\label{traffic_threshold}
\sup_{x\in \mathbb{R}} [u' _0 (x)] \geq G\big{(} \mu, \lambda(\mu , \inf_{x\in \mathbb{R}} [u' _0 (x)] ), \xi(0), \eta(0)  \big{)},
\end{equation}
then $u_x$ must blow-up at some finite time.\\
Here,
$$\lambda(\mu, m ):= \frac{e}{\mu}\bigg{[}2+ \sqrt{4 +\frac{16}{27}\bigg{(}4-2\min\bigg{[}-\frac{4e}{\mu}, m\bigg{]} \bigg{)} } \bigg{]}$$
and
$$G(\mu, \lambda(\mu, m), \xi, \eta):= \lambda(\mu, m) \cdot \bigg{[}1- \exp \bigg{(}-\mu \lambda(\mu, m) \frac{27}{248e^2} (\min[L^{u_0}(\xi), L^{u_0}(\eta)]  -\mu) \bigg{)}  \bigg{]}^{-1}. $$
\end{theorem}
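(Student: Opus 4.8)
The plan is to reduce \eqref{trafficAB} to a family of scalar differential inequalities for the slope $d:=u_x$ transported along characteristics, and then to exhibit a Riccati-type blow-up of the maximal slope. First I would write the characteristics $\dot x=\partial_u F$ and compute the evolution of $u$ and of $d$ along them. Setting $g:=e^{-\bar u+\tilde u}$ and using $\partial_x\bar u=u(t,x+1)-u(t,x)$, $\partial_x\tilde u=u(t,x)-u(t,x-1)$ together with the algebraic identities $F_{\bar u}=-F$, $F_{\tilde u}=F$, $F_{u\bar u}=-F_u$, $F_{u\tilde u}=F_u$, $F_{\bar u\bar u}=F_{\tilde u\tilde u}=F$ and $F_{\bar u\tilde u}=-F$, one differentiation of the equation in $x$ collapses to the clean master relation
\begin{equation*}
d'=L\,d^2+2F_u\,(P-Q)\,d-F\,(P-Q)^2+F\big[d(t,x+1)-2d(t,x)+d(t,x-1)\big],
\end{equation*}
where $L=-\partial^2_{uu}F=(4-6u)g$ and $P-Q=u(t,x+1)-2u(t,x)+u(t,x-1)$ is the discrete Laplacian of $u$; along the same curves $\dot u=F\,(P-Q)$. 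The term $L\,d^2$ is the shock-forming mechanism in the concave regime $L>0$, and everything else must be shown to be a controllable perturbation.

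Second, I would record the uniform bounds that tame the perturbation. Since $0\le u_0\le1$ is preserved and $\bar u,\tilde u$ are unit averages of $u$, we get $\bar u,\tilde u\in[0,1]$, hence $g\in[e^{-1},e]$, $0\le F\le\frac{4e}{27}$, $\abs{F_u}\le e$ and $\abs{P-Q}\le2$. From $\dot u=F(P-Q)$ we obtain $\abs{\dot u}\le\frac{8e}{27}$, and differentiating $L=(4-6u)g$ along characteristics gives a uniform bound $\abs{\frac{d}{dt}L}\le\frac{248e^2}{27}$. Consequently, if $L$ starts above $\mu$ at $\xi(0)$ and $\eta(0)$, it stays above $\mu$ along the corresponding characteristics for at least a time $T_{\max}=\frac{27}{248e^2}\big(\min[L^{u_0}(\xi(0)),L^{u_0}(\eta(0))]-\mu\big)$ --- exactly the quantity appearing inside the exponential in $G$. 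This slow-drift step is what converts the pointwise hypotheses on the initial data into a usable lower bound $L\ge\mu$ on a concrete time interval.

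Third, I would track the extremal slopes $M(t):=\sup_x u_x$ and $m(t):=\inf_x u_x$, attained at $\xi(t),\eta(t)$. At these interior extrema $u_{xx}=d_x=0$, so $M'$ and $m'$ are governed by the master relation, and the nonlocal bracket is pinched by the extremal values: at $\xi$ one has $2F(m-M)\le F[d(\xi+1)-2M+d(\xi-1)]\le0$, and at $\eta$ one has $0\le F[\cdots]\le 2F(M-m)$. The minimum slope is handled first. Because $L\,m^2\ge0$ and the nonlocal bracket at $\eta$ is nonnegative, the substitution $w:=-m$ yields $w'\le-\mu w^2+4e\,w+\frac{16e}{27}$ whenever $L\ge\mu$; an invariant-region analysis of this Riccati inequality, comparing the initial datum $\inf_x u_0'$ with the equilibrium of the quadratic, produces the explicit a priori bound $\abs{m(t)}\le\lambda(\mu,\inf_x u_0')$, with the $\min[-\frac{4e}{\mu},\cdot]$ and the square root encoding precisely this comparison.

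Finally, inserting $\abs{m}\le\lambda$ into the maximum-slope equation absorbs all lower-order terms and collapses it (for $M\ge\lambda$) to the Bernoulli-type inequality $M'\ge\mu M(M-\lambda)$. Integrating this shows $M$ reaches $+\infty$ within time $\frac{1}{\mu\lambda}\ln\frac{M(0)}{M(0)-\lambda}$; requiring this blow-up time to fit inside the admissibility window $T_{\max}$ of the previous step gives exactly $M(0)=\sup_x u_0'\ge\frac{\lambda}{1-\exp(-\mu\lambda T_{\max})}=G$, which is \eqref{traffic_threshold}. The main obstacle is the coupling introduced by the discrete-Laplacian term: it ties the evolution of $M(t)$ to the slopes at the shifted points $x\pm1$ and hence to $m(t)$, while the Riccati blow-up simultaneously demands that $L\ge\mu$ persist along the moving extremal characteristics. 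Closing this loop --- bounding $m$ a priori by $\lambda$ and propagating the lower bound on $L$ over the data-dependent window $T_{\max}$ at once --- is the crux of the argument, and is what forces the nested thresholds $\lambda$ and $G$ rather than a single clean constant.
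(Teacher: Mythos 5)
Your proposal follows essentially the same route as the paper's proof: the identical Riccati equation for $d=u_x$ along characteristics, the same $\tfrac{248e^2}{27}$ drift bound on $L=-F_{11}$ producing the admissibility window $t^*$, Seliger-style tracking of the two extremal slopes $M$ and $m$, a Riccati/invariant-region lower bound for the minimum slope, and integration of $\dot M \geq \mu M(M-\lambda)$ with the requirement that blow-up occur before $t^*$, which reproduces exactly the threshold $G$. The only slip is cosmetic: the minimum-slope analysis yields $m(t)\geq \min\left[-\tfrac{4e}{\mu},\,\inf_x u_0'(x)\right]$ rather than $\lvert m(t)\rvert\leq\lambda$; the quantity $\lambda$ then arises as the upper bound on the unstable root of the maximum-slope inequality after this lower bound is inserted --- which is precisely how your final step uses it, so the argument is unaffected.
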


\textcolor{black}{\textbf{Remark.} Some remarks are in order at this point.\\
(i) We note that finite time blow-up of $u_x$ describes the generation of the traffic congestion in LWR type traffic flow models.
Thus the condition in \eqref{traffic_threshold} represents the following: if the maximum slope (i.e., $\sup_{x\in \mathbb{R}} [u' _0 (x)]$) of the traffic density is greater than the threshold function $G$ \emph{initially}, then the traffic jam occur in finite time. We should point out that the threshold function $G$ has several arguments, but it mainly depends on the \emph{minimum} \emph{initial} slope (i.e., $\inf_{x\in \mathbb{R}} [u' _0 (x)]$). Thus it can be interpreted that not only the car density behind the high traffic region but also the car density ahead of the thigh traffic region contribute to the blow-up.\\
(ii) In the proof of Theorem \ref{thm_traffic}, we want to emphasize the method and not the technicalities. For this reason we considered the simplest parameter choices with   $K_a = K_b = \gamma_a = \gamma_b =1$, but our argument in the proof works equally well for general parameters.  In the appendix section, we sketch the proof of the theorem in the case of general parameters, as well as their effects on the blow-up condition and physical interpretation.\\
(iii) In the course of the proof, we introduce numerous notations to simplify the system of differential equations that governs the slope of the traffic density. In addition to this, due to the non-local nature of \eqref{trafficAB}, several non-local terms are present, and controlling these non-local terms are essential in the proof. For readers' convenience, the notation table 
\ref{table1} is included in the appendix section.
}

$$$$
\section{Proof of theorem}

Let $F(u, \bar{u}, \tilde{u}):=u(1-u)^{2} e^{-\bar{u} + \tilde{u}}$ and $K_a = K_b = \gamma_a = \gamma_b =1$.  Letting $d:=u_x$ and applying $\partial_x$ to the first equation of \eqref{trafficAB} and expanding, we obtain

\begin{equation*}
\begin{split}
&\partial_t u + \partial_x F(u, \bar{u}, \tilde{u}) =0\\
\Rightarrow &\partial_t d + \partial_x \big{\{}   F_1 d + F_2 \bar{u}_x + F_3 \tilde{u}_x \big{\}}=0\\
\Rightarrow &\partial_t d + F_1 \partial_x d + F_{11}d^2 + F_{12}\bar{u}_x d + F_{13}\tilde{u}_x d\\
&\ \ \ \     +F_2\bar{u}_{xx} + F_{21}\bar{u}_x d + F_{22}\bar{u}^2 _x + F_{23}\bar{u}_x \tilde{u}_x\\
&\ \ \ \    +F_3 \tilde{u}_{xx} + F_{31} \tilde{u}_x d + F_{32}\tilde{u}_x \bar{u}_x + F_{33}\tilde{u}^2 _x =0.
\end{split}
\end{equation*}

Thus,
\begin{equation}\label{main1}
\begin{split}
\dot{d}&:= (\partial_t + F_1 \partial_x)d\\
&=-\big{\{} F_{11} d^2 + 2(F_{12}\bar{u}_x + F_{13}\tilde{u}_x )d + 2F_{23}\bar{u}_x \tilde{u}_x +F_{22}\bar{u}^2 _x + F_{33}\tilde{u}^2 _x + F_2 \bar{u}_{xx} + F_3 \tilde{u}_{xx} \big{\}}.
\end{split}
\end{equation}

Here, $F_1 : = \partial_u F$, $F_{11}:=\partial^2 _u F$, and $F_2 : = \partial_{\bar{u}}F$, etc. In addition to this, for notational convenience, we shall use $F(t,x)$ to mean $F(u(t,x), \bar{u}(t,x), \tilde{u}(t,x))$.

We note that $u(t,x)$ has \emph{a priori} $L^{\infty}$ bound during its life span $[0,T)$:
$$0\leq u(t,x) \leq 1, \ \ \forall x\in \mathbb{R}, \ 0\leq t <T,$$
which follows from the method of characteristics. The proof is similar to the proof of Lemma 2.3 in \cite{LL11}.

We calculate derivatives of $F$ with respect to $u$, $\bar{u}$ and $\tilde{u}$.   Straightforward calculation gives
\begin{equation*}
\begin{split}
&F_1 = (1- 4u+3u^2)e^{-\bar{u} + \tilde{u}}, \ \ F_2 = -u(1-u)^2e^{-\bar{u} + \tilde{u}}, \ \ F_3 = u(1-u)^2 e^{-\bar{u} + \tilde{u}},\\  
&F_{11} = (-4+6u)e^{-\bar{u} + \tilde{u}}, \ \ F_{22} = u(1-u)^2 e^{-\bar{u} + \tilde{u}}, \ \ F_{33}=u(1-u)^2e^{-\bar{u} + \tilde{u}},\\
&F_{12}= -(1-4u +3u^2)e^{-\bar{u} + \tilde{u}}, \ \ F_{13}=(1-4u +3u^2)e^{-\bar{u} + \tilde{u}}, \ \ F_{23}=-u(1-u)^2 e^{-\bar{u} + \tilde{u}}.
\end{split}
\end{equation*}

We then trace the evolution of the leading coefficient $F_{11}$ of \eqref{main1}. That is, we estimate $$\dot{F}_{11}=(\partial_t + F_1 \partial_x)F_{11}$$ along the characteristic:

\begin{lemma}\label{lemma2.1}
For $t>0$, it holds,
\begin{equation}\label{f11dotbound}
 -\frac{248e^2}{27}\leq \dot{F}_{11}(u, \bar{u}, \tilde{u}) \leq \frac{248e^2}{27}
\end{equation}
\end{lemma}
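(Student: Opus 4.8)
The plan is to differentiate the leading coefficient $F_{11}=(-4+6u)e^{-\bar u+\tilde u}$ along the characteristic, reduce $\dot F_{11}$ to quantities involving only the pointwise values of $u$ at $x$ and at the shifted points $x\pm1$, and then invoke the a priori bound $0\le u\le1$ to turn the estimate into a one-variable optimization. First I would apply the chain rule: since $F_{11}$ is a function of $u,\bar u,\tilde u$, one has $\dot F_{11}=F_{111}\,\dot u+(\partial_{\bar u}F_{11})\,\dot{\bar u}+(\partial_{\tilde u}F_{11})\,\dot{\tilde u}$. A direct computation gives $F_{111}=6e^{-\bar u+\tilde u}$, $\partial_{\bar u}F_{11}=-F_{11}$ and $\partial_{\tilde u}F_{11}=F_{11}$, so the estimate collapses to $\dot F_{11}=e^{-\bar u+\tilde u}\big[6\dot u+(4-6u)(\dot{\bar u}-\dot{\tilde u})\big]$, cutting the material derivatives to be controlled down to $\dot u$ and $\dot{\bar u}-\dot{\tilde u}$.

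Second, I would evaluate these material derivatives. For $\dot u$, the equation gives $\dot u=\partial_t u+F_1u_x=-(F_2\bar u_x+F_3\tilde u_x)$; since $F_2=-F_3=-u(1-u)^2e^{-\bar u+\tilde u}$ while $\bar u_x=u(t,x+1)-u(t,x)$ and $\tilde u_x=u(t,x)-u(t,x-1)$, this becomes $\dot u=u(1-u)^2e^{-\bar u+\tilde u}\,[u(t,x+1)-2u(t,x)+u(t,x-1)]$. The nonlocal terms are the delicate point: differentiating the averages in time and using the equation under the integral sign yields $\partial_t\bar u=\int_x^{x+1}\partial_t u\,dy=F(t,x)-F(t,x+1)$ and $\partial_t\tilde u=F(t,x-1)-F(t,x)$, which combined with $\partial_x\bar u=\bar u_x$, $\partial_x\tilde u=\tilde u_x$ gives the closed form $\dot{\bar u}-\dot{\tilde u}=2F(t,x)-F(t,x+1)-F(t,x-1)+F_1(t,x)[u(t,x+1)+u(t,x-1)-2u(t,x)]$. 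The structural gain is that $\dot F_{11}$ is now expressed entirely through $u$ sampled at $x$ and $x\pm1$, with no residual derivatives of $u$.

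Third, I would bound. The a priori bound $0\le u\le1$ forces $0\le\bar u,\tilde u\le1$, so every factor $e^{-\bar u+\tilde u}\in[e^{-1},e]$ and every product of two such factors is at most $e^2$; moreover $|u(t,x+1)-2u(t,x)+u(t,x-1)|\le2$, $u(1-u)^2\le\tfrac{4}{27}$, and $|1-4u+3u^2|\le1$. Estimating each summand of $e^{-\bar u+\tilde u}\big[6\dot u+(4-6u)(\dot{\bar u}-\dot{\tilde u})\big]$ while retaining the exact $u$-dependence of the local coefficients reduces the problem to $|\dot F_{11}|\le e^2\,\Theta(u)$, where $\Theta(u)=12u(1-u)^2+|4-6u|\big(2u(1-u)^2+\tfrac{8}{27}+2|1-4u+3u^2|\big)$.

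Finally, I would maximize $\Theta$ on $[0,1]$. Splitting at the sign changes $u=\tfrac13,\tfrac23$ of $4-6u$ and $1-4u+3u^2$ and checking that $\Theta$ is decreasing at the left endpoint shows the maximum is attained at $u=0$, with $\Theta(0)=4\big(\tfrac{8}{27}+2\big)=\tfrac{248}{27}$, giving $|\dot F_{11}|\le\tfrac{248e^2}{27}$; since the bound is on the absolute value, it supplies both inequalities in \eqref{f11dotbound}. I expect the main obstacle to be twofold: correctly producing the closed form for the nonlocal material derivative $\dot{\bar u}-\dot{\tilde u}$ (the time derivative of the averages must be converted into flux differences at $x\pm1$ through the equation), and extracting the sharp constant. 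Bounding each of $|4-6u|$, $u(1-u)^2$ and $|1-4u+3u^2|$ separately at its own maximum overcounts and yields $\tfrac{328}{27}$ instead of $\tfrac{248}{27}$, so the coefficients must be kept together and optimized simultaneously in $u$.
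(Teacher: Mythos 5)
Your proposal is correct, and its skeleton is the same as the paper's: establish the a priori bounds ($0\le u,\bar u,\tilde u\le 1$, $0\le F\le \tfrac{4e}{27}$, $|\bar u_x|,|\tilde u_x|\le 1$), compute the material derivative of $F_{11}$ (your chain-rule computation, in which $\dot u=-F_2\bar u_x-F_3\tilde u_x$ carries no $u_x$, is exactly the paper's cancellation of the $\pm 6F_1u_x$ terms), and your closed-form identity for $\dot F_{11}$ agrees with the paper's displayed one (it also confirms that the factor should be $\bar u_x-\tilde u_x$, not $\bar u_x+\tilde u_x$ as typed in the paper --- a harmless typo). Where you genuinely diverge is the extraction of the constant. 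The paper combines the two coefficients of $\bar u_x-\tilde u_x$ algebraically, via $6u(1-u)^2+(4-6u)(1-4u+3u^2)=4(1-u)(3u^2-3u+1)$, whose range on $[0,1]$ is $[0,4]$, and then applies the uniform bounds $|\bar u_x-\tilde u_x|\le 2$ and $|\bar u_t-\tilde u_t|\le\tfrac{8e}{27}$, giving $e\big(2\cdot 4e+4\cdot\tfrac{8e}{27}\big)=\tfrac{248e^2}{27}$. You instead bound those two coefficients by the triangle inequality but keep their exact $u$-dependence (and likewise keep $2F(t,x)\le 2u(1-u)^2e$ local), then maximize $\Theta$ on $[0,1]$. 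The two routes give the same sharp constant only because your maximum sits at $u=0$, where $u(1-u)^2=0$, so your triangle inequality is an equality there; at interior points your pointwise bound is strictly weaker than the paper's (e.g.\ at $u=\tfrac13$), but that is irrelevant for the supremum. One step of yours needs tightening: ``$\Theta$ is decreasing at the left endpoint'' does not by itself place the maximum at $u=0$. What is true, and what completes your argument, is that on $[0,\tfrac13]$ one has $\Theta(u)=\tfrac{248}{27}-\tfrac{232}{9}u+20u^2+8u^3-12u^4$, so $\Theta'(u)\le -\tfrac{232}{9}+\tfrac{40}{3}+\tfrac{8}{3}=-\tfrac{88}{9}<0$ on the whole subinterval, while on $[\tfrac13,1]$ one has $u(1-u)^2\le\tfrac{4}{27}$ and $|1-4u+3u^2|=(1-u)(3u-1)\le\tfrac13$, whence $\Theta\le\tfrac{48}{27}+2\big(\tfrac{8}{27}+\tfrac{8}{27}+\tfrac23\big)=\tfrac{116}{27}<\tfrac{248}{27}$; together these place the maximum at $u=0$ with value $\tfrac{248}{27}$, as you claimed.
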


\begin{proof}
We first estimate the uniform bounds of $\bar{u}$, $F(u, \bar{u}, \tilde{u})$, $\bar{u}_x$, $\bar{u}_t$ , $\tilde{u}_x$ and $\tilde{u}_t$. Since $0\leq u(t,x)\leq 1$, $\bar{u} = \int^{x+1} _{x} u(t,y) \, dy$ and $\tilde{u} = \int^{x} _{x-1} u(t,y) \, dy$ have bounds
$$0\leq \bar{u} \leq 1, \ \ \ and \ \ \ 0\leq \tilde{u} \leq 1.$$
Since $F(u, \bar{u}, \tilde{u}) = u(1-u)^2 e^{-\bar{u} + \tilde{u}}$, the above bounds give
\begin{equation}\label{fbd}
0 \leq F(u, \bar{u}, \tilde{u}) \leq \frac{4e}{27}.
\end{equation}
In addition to this, $\bar{u}_x = \partial_x \big{(} \int^{x+1} _{x} u(t,y) \, dy  \big{)} = u(x+1)-u(x)$ has
$$-1 \leq \bar{u}_x \leq 1.$$
Similarly, 
$$- 1 \leq \tilde{u}_x \leq 1.$$
Finally, since
$$\bar{u}_t = \partial_t \bigg{(} \int^{x+1} _{x} u(t,y) \, dy  \bigg{)} = \int^{x+1} _{x} - \partial_y F(u, \bar{u}, \tilde{u}) \, dy = - F(u, \bar{u}, \tilde{u}) \bigg{|}^{y=x+1} _{y=x},$$
\eqref{fbd} gives
$$-\frac{4e}{27} \leq \bar{u}_t \leq \frac{4e}{27}.$$
Similarly it holds,
$$-\frac{4e}{27} \leq \tilde{u}_t \leq \frac{4e}{27}.$$
We now turn to the estimation of $\dot{F}_{11} =(\partial_t + F_1 \partial_x)F_{11}$. Consider 
\begin{equation}\label{est1}
\begin{split}
\partial_t F_{11} &= e^{-\bar{u} + \tilde{u}} \big{\{} 6u_t + (4-6u)\bar{u}_t - (4-6u)\tilde{u}_t  \big{\}}\\
&= e^{-\bar{u} + \tilde{u}} \big{\{} 6 (-F_1 u_x - F_2 \bar{u}_x - F_3 \tilde{u}_x) + (4-6u)\bar{u}_t - (4-6u)\tilde{u}_t  \big{\}},
\end{split}
\end{equation}
here, $u_t = -\partial_x F(u, \bar{u}, \tilde{u})= -F_1 u_x - F_2 \bar{u}_x - F_3 \tilde{u}_x$ was used. Also, we expand $F_1 \partial_x F_{11}$,
\begin{equation}\label{est2}
F_1 \partial_x F_{11}=F_1 e^{-\bar{u} +\tilde{u}} \big{\{} 6 u_x + (-4 +6u)(-\bar{u}_x + \tilde{u}_x) \big{\}}.
\end{equation}

By adding \eqref{est1} and \eqref{est2}, we obtain
\begin{equation*}
\begin{split}
\dot{F}_{11} &= (\partial_t + F_1 \partial_x)F_{11}\\
&=e^{-\bar{u} +\tilde{u}} \big{\{} (\bar{u}_x +\tilde{u}_x )(4F_1 -6uF_1 -6F_2) + (\bar{u}_t - \tilde{u}_t)(4-6u) \big{\}}\\
&=e^{-\bar{u} +\tilde{u}} \bigg{\{} (\bar{u}_x +\tilde{u}_x ) e^{-\bar{u} +\tilde{u}} \big{(}  -4\cdot (3u^3 -6u^2 +4u -1)  \big{)} + (\bar{u}_t - \tilde{u}_t)(4-6u) \bigg{\}}.\\
\end{split}
\end{equation*}
Note that $Range[ -4\cdot (3u^3 -6u^2 +4u -1)  ] = [0,4]$, for $0\leq u \leq 1$. The uniform bounds of $u$, $\bar{u}$, $\tilde{u}$, $\bar{u}_x$, $\tilde{u}_x$, $\bar{u}_t$,  and $\tilde{u}_t$ give the desired result.
\end{proof}

Now, define for $t\in [0,T)$,
$$M(t):=\sup_{x \in \mathbb{R}} [u_x(t,x)]=d(t, \xi(t)),$$
and
$$N(t):=\inf_{x \in \mathbb{R}} [u_x(t,x)]=d(t, \eta(t)).$$


This non-standard approach of tracing the dynamics  of $d$ along two different curves $\xi(t)$ and $\eta(t)$ originates in \cite{S68} proving wave breaking for the Whitham equation \cite{GW74}. The mappings $t \rightarrow \xi(t)$ and $t \rightarrow \eta(t)$
however, may be multi-valued so the curvess in general are not necessarily well-defined. To carry out Seliger's formal analysis, one needs to assume that the curves $\xi(t)$ and $\eta(t)$ are smooth. However, it turns out that it is impossible to guarantee the smoothness of such curves \cite{NS94}. This additional strong assumption was shown unnecessary later by the rigorous proof of Constantin and Escher \cite{CE98}. 


One can easily see that  $M(t) \geq 0$ and $N(t) \leq 0$.

We let $$\alpha : = \frac{248e^2}{27},$$ and integration over $[0,t]$ of \eqref{f11dotbound} gives
\begin{equation}\label{f11ineq}
-F_{11}(t) \geq -\alpha t - F_{11}(0), \ \ t>0.
\end{equation}
According to the assumption in Theorem \ref{thm_traffic}, we assumed that both $-F_{11}(0, \xi(0))=(4 -6u(0, \xi(0))) e^{-\bar{u}(0, \xi(0)) +\tilde{u}(0, \xi(0)) }$ and $-F_{11}(0, \eta(0))$ are  positive. For
 $$\mu \in \big{(}0, \min[-F_{11}(0, \xi(0)), -F_{11}(0, \eta(0))] \big{)},$$
and
\begin{equation}\label{t_star_2}
t^* = \frac{1}{\alpha}\big{(}  \min[-F_{11}(0, \xi(0)), -F_{11}(0, \eta(0))] -\mu \big{)},
\end{equation}
we see that from \eqref{f11ineq}, $\mu>0$ serves as a short time positive lower bound of $-F_{11}(t, \xi(t))$ and $-F_{11}(t, \eta(t))$, because
\begin{equation}\label{mu_bound}
-F_{11}(t, \xi(t)), -F_{11}(t, \eta(t)) \geq \mu >0, \ \ for \ \ t\in[0, t^*].
\end{equation}

Along $(t, \xi(t))$, \eqref{main1} can be written as
\begin{equation}\label{M_eq}
\begin{split}
\dot{M}&=-\big{\{} F_{11} M^2 + 2(F_{12}\bar{u}_x + F_{13}\tilde{u}_x )M + 2F_{23}\bar{u}_x \tilde{u}_x +F_{22}\bar{u}^2 _x + F_{33}\tilde{u}^2 _x + F_2 \bar{u}_{xx} + F_3 \tilde{u}_{xx} \big{\}}\\
&=-\big{\{} F_{11} M^2 + 2(F_{12}\bar{u}_x + F_{13}\tilde{u}_x )M + 2F_{23}\bar{u}_x \tilde{u}_x +F_{22}\bar{u}^2 _x + F_{33}\tilde{u}^2 _x \big{\}} - F_2 \bar{u}_{xx} - F_3 \tilde{u}_{xx} \\
&\geq -\big{\{} F_{11} M^2 + 2(F_{12}\bar{u}_x + F_{13}\tilde{u}_x )M + 2F_{23}\bar{u}_x \tilde{u}_x +F_{22}\bar{u}^2 _x + F_{33}\tilde{u}^2 _x \big{\}}\\
& \ \ \ \ \  \ \ \ \ \ \ \ \ \ \  \ \ \ \ \ \ \ \ \ \ \ \ \ \ \ \ \ \ \ \  \ \ \ \ \  \ \ \ \ \  \ \ \ \ \ -F_2 \cdot (N-M) - F_3 \cdot (M-N), \ \ \ \ a.e., 
\end{split}
\end{equation}
where the last inequality follows from the fact that
$$\bar{u}_{xx}=u_x (t, \xi(t)+1) - u_x (t, \xi(t))=u_x (t, \xi(t)+1) -M\geq N-M,$$
$$\tilde{u}_{xx}=u_x (t, \xi(t))  - u_x (t, \xi(t)-1) =M- u_x (t, \xi(t)-1) \leq M-N, $$
and the non-positivity of $F_2$ and $-F_3$.
Note that all coefficient functions above are evaluated at $\xi(t)$.

Also, along $(t, \eta(t))$, \eqref{main1} can be written as
\begin{equation}
\begin{split}
\dot{N}&=-\big{\{} F_{11} N^2 + 2(F_{12}\bar{u}_x + F_{13}\tilde{u}_x )N + 2F_{23}\bar{u}_x \tilde{u}_x +F_{22}\bar{u}^2 _x + F_{33}\tilde{u}^2 _x + F_2 \bar{u}_{xx} + F_3 \tilde{u}_{xx} \big{\}},
\end{split}
\end{equation}
where all coefficient functions above are evaluated at $\eta(t)$.
We consider
\begin{equation}\label{N_eq}
\begin{split}
\dot{N}&=-\big{\{} F_{11} N^2 + 2(F_{12}\bar{u}_x + F_{13}\tilde{u}_x )N + 2F_{23}\bar{u}_x \tilde{u}_x +F_{22}\bar{u}^2 _x + F_{33}\tilde{u}^2 _x \big{\}} - F_2 \bar{u}_{xx} - F_3 \tilde{u}_{xx} \\
&\geq -\big{\{} F_{11} N^2 + 2(F_{12}\bar{u}_x + F_{13}\tilde{u}_x )N + 2F_{23}\bar{u}_x \tilde{u}_x +F_{22}\bar{u}^2 _x + F_{33}\tilde{u}^2 _x \},  \ \ \ a.e.,
\end{split}
\end{equation}
where the last inequality follows from the fact that 
$$\bar{u}_{xx}=u_x(t, \eta(t) +1) - u_x (t, \eta(t))  =u_x(t, \eta(t) +1) -N \geq N-N =0,$$
$$\tilde{u}_{xx} = u_x (t, \eta(t)) - u_x (t, \eta(t)-1)=N -u_x (t, \eta(t)-1) \leq N-N=0, $$
and the non-positivity of $F_2$ and $-F_3$.

From \eqref{N_eq}, we find the lower bound of $N(t)$. \eqref{N_eq} can be written as
\begin{equation}\label{N_eq2}
\dot{N} \geq -F_{11}(N-N_1 (t))(N - N_2 (t)) \ \ a.e., 
\end{equation}
where
\begin{equation*}
\begin{split}
N_{1,2} (t)&=-\frac{1}{2F_{11}} \bigg{\{}2(F_{12} \bar{u}_x + F_{13}\tilde{u}_x)\\
&\mp \sqrt{4(F_{12}\bar{u}_x + F_{13}\tilde{u}_x)^2 -4F_{11} ( 2F_{23}\bar{u}_x \tilde{u}_x +F_{22}\bar{u}^2 _x + F_{33}\tilde{u}^2 _x) } \bigg{\}}.
\end{split}
\end{equation*}
We note that for $t \in [0, t^*]$, we have $-F_{11} \geq \mu >0$ and 
$$N_1 (t) \leq 0 \leq N_2(t),$$
because $2F_{23}\bar{u}_x \tilde{u}_x +F_{22}\bar{u}^2 _x + F_{33}\tilde{u}^2 _x \geq 0$. Indeed,
\begin{equation}\label{pos1}
2F_{23}\bar{u}_x \tilde{u}_x +F_{22}\bar{u}^2 _x + F_{33}\tilde{u}^2 _x  = u(1-u)^2 (\bar{u}_x - \tilde{u}_x)^2  e^{-\bar{u} + \tilde{u}}\geq 0.
\end{equation}


We bound $N_1(t)$ from below:
\begin{lemma}
$N_1(t)$ is uniformly bounded from below, in particular,
it holds
\begin{equation}\label{N_1_bound}
N_1 (t) \geq -\frac{4e}{\mu}.
\end{equation}
\end{lemma}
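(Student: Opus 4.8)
The plan is to treat $N_1(t)$ as the smaller root of the quadratic $F_{11}N^2 + 2(F_{12}\bar{u}_x + F_{13}\tilde{u}_x)N + C$, where $C := 2F_{23}\bar{u}_x\tilde{u}_x + F_{22}\bar{u}_x^2 + F_{33}\tilde{u}_x^2 \geq 0$ by \eqref{pos1}, and to extract a clean lower bound by rationalizing. Write $A := F_{12}\bar{u}_x + F_{13}\tilde{u}_x$ and $D := \sqrt{A^2 - F_{11}C} = \sqrt{A^2 + (-F_{11})C}$. On $[0,t^*]$ we have $-F_{11}\geq\mu>0$ by \eqref{mu_bound}, so $F_{11}<0$, $D\geq|A|$, hence $A+D\geq 0$ and $N_1 = (-A+D)/F_{11}$. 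If $C=0$ (i.e.\ $\bar{u}_x=\tilde{u}_x$, or $u\in\{0,1\}$) then $N_1\in\{0,\,2A/(-F_{11})\}$ and the bound follows at once from $|A|\leq 2e$ and $-F_{11}\geq\mu$; so assume $C>0$, in which case $D>|A|$ and rationalizing gives the convenient form
\begin{equation*}
N_1 = \frac{-C}{A+D}, \qquad A+D>0.
\end{equation*}

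Next I would reduce everything to a one-variable estimate. Since $A+D\geq D-|A|>0$ and $(D-|A|)(D+|A|)=D^2-A^2=(-F_{11})C$, we get
\begin{equation*}
-N_1 = \frac{C}{A+D} \leq \frac{C}{D-|A|} = \frac{D+|A|}{-F_{11}} \leq \frac{D+|A|}{\mu},
\end{equation*}
using $-F_{11}\geq\mu$. Thus it suffices to prove the pointwise bound $D+|A|\leq 4e$. The main computational step is the identity obtained by inserting the explicit derivatives $F_{11}=(-4+6u)e^{-\bar{u}+\tilde{u}}$, $F_{12}=-F_{13}=-(1-4u+3u^2)e^{-\bar{u}+\tilde{u}}$ and $F_{22}=F_{33}=-F_{23}=u(1-u)^2e^{-\bar{u}+\tilde{u}}$. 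Writing $s:=\bar{u}_x-\tilde{u}_x$ and $p:=e^{-\bar{u}+\tilde{u}}$, one finds $A=-(1-4u+3u^2)ps$, and after simplification,
\begin{equation*}
A^2 + (-F_{11})C = p^2 s^2 (1-u)^2 (1-2u+3u^2),
\end{equation*}
so that $D = p|s|(1-u)\sqrt{1-2u+3u^2}$ and $D+|A| = p|s|(1-u)\big(\sqrt{1-2u+3u^2} + |1-3u|\big)$.

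Finally I would maximize the three factors over their a priori ranges $p\leq e$, $|s|=|\bar{u}_x-\tilde{u}_x|\leq 2$, and $u\in[0,1]$. The only nontrivial part is the scalar optimization of $h(u):=(1-u)\big(\sqrt{1-2u+3u^2}+|1-3u|\big)$ on $[0,1]$: a direct computation gives $h(0)=2$ with $h'(0)=-6<0$, while $h$ remains well below $2$ on the rest of the interval, so $\max_{[0,1]}h=2$ is attained at $u=0$. Combining, $D+|A|\leq e\cdot 2\cdot 2 = 4e$, which yields $-N_1\leq 4e/\mu$, i.e.\ \eqref{N_1_bound}.

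The step I expect to be the main obstacle is the algebraic collapse $A^2+(-F_{11})C = p^2 s^2(1-u)^2(1-2u+3u^2)$: the discriminant looks unwieldy, and only after using $F_{13}=-F_{12}$, $F_{22}=F_{33}=-F_{23}$ and the identity $(1-3u)^2+(4-6u)u = 1-2u+3u^2$ does it factor into a form whose square root is explicit. Obtaining the sharp constant $4e$ (rather than a looser one) hinges on this factorization together with the observation that the $u$-dependent factor $h$ peaks at the boundary point $u=0$; a cruder estimate that bounds $|A|$ and $C$ separately overshoots $4e$.
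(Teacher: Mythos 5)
Your proof is correct in substance but takes a genuinely different route from the paper's. The paper never computes the discriminant explicitly: it writes $N_1 = Q/(4-6u)$, where $Q$ is the exponential-free combination, and bounds $Q$ through the completing-the-square identity $Q^2 = (4-6u)u(1-u)^2h^2 - 2Q(1-4u+3u^2)h$ followed by Young's inequality with $\epsilon=\tfrac12$, which gives $Q^2\le 16$, hence $Q\ge -4$ and then $N_1\ge -4e/\mu$ exactly as in \eqref{N_1_bound}. You instead factor the discriminant in closed form, $A^2+(-F_{11})C = p^2s^2(1-u)^2(1-2u+3u^2)$ (your identity $(1-3u)^2+(4-6u)u=1-2u+3u^2$ checks out, as do the formulas for $A$, $C$, $D$), and reduce the lemma to maximizing $h(u)=(1-u)\big(\sqrt{1-2u+3u^2}+|1-3u|\big)$ on $[0,1]$. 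Your route buys structural insight -- an exact expression for $D$ and the identification of the extremal configuration $u=0$, $|s|=2$ -- while the paper's trick buys brevity: no case analysis, no optimization, and no need for the discriminant to factor at all.

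Two remarks. First, your rationalization detour $N_1=-C/(A+D)$, with its separate $C=0$ case, is unnecessary: from $N_1=(D-A)/F_{11}$ one gets directly $-N_1=(D-A)/(-F_{11})\le (D+|A|)/(-F_{11})\le (D+|A|)/\mu$, which is exactly where your chain of inequalities lands anyway. Second, the scalar maximization is asserted rather than proved: $h(0)=2$ and $h'(0)=-6$ only control a neighborhood of $u=0$, and the claim that $h$ stays ``well below $2$'' elsewhere genuinely needs an argument, since the crude bound on $[1/3,1]$ (namely $1-u\le 2/3$, $\sqrt{1-2u+3u^2}\le\sqrt2$, $3u-1\le 2$) gives $(2/3)(\sqrt2+2)>2$. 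The gap is elementary to fill: on $[0,1/3]$ both factors of $h$ are positive and decreasing, so $h\le h(0)=2$; on $[1/3,2/3]$ one has $1-2u+3u^2\le 1$ and $3u-1\le 1$, so $h\le \tfrac23\cdot 2=\tfrac43$; on $[2/3,1]$ one has $1-u\le\tfrac13$, so $h\le\tfrac13(\sqrt2+2)<2$. With that verification added, your proof is complete and yields the same constant $4e/\mu$ as the paper.
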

\begin{proof}
Let $v:=\bar{u}_x = u(x+1)-u(x)$, $w:=\tilde{u}_x = u(x)-u(x-1)$ and $h:=w-v$. Then $N_1$ can be written as
\begin{equation*}
\begin{split}
N_1 = \frac{1}{(4-6u)} \bigg{[} &-(1-4u +3u^2)(w-v)\\
& - \sqrt{\{ (1-4u +3u^2)(w-v) \}^2 -(-4 +6u)u(1-u)^2 (w-v)^2} \bigg{]}.
\end{split}
\end{equation*}
We let 
$$Q:=  -(1-4u +3u^2)h - \sqrt{\{ (1-4u +3u^2)h \}^2 -(-4 +6u)u(1-u)^2 h^2}. $$
By completing the square, 
\begin{equation*}
\begin{split}
Q^2 &= -(-4 +6u)u(1-u)^2 h^2 -2Q(1-4u +3u^2)h\\
&\leq -(-4 +6u)u(1-u)^2 h^2 + \epsilon Q^2 + \frac{(1-4u +3u^2)^2 h^2}{\epsilon}, \ \ 0<\epsilon<1.
\end{split}
\end{equation*}
It follows that
\begin{equation*}
\begin{split}
(1-\epsilon)Q^2 &\leq \frac{1}{\epsilon}h^2 \bigg{\{} (1-4u +3u^2)^2 -(-4 +6u)u(1-u)^2 \epsilon  \bigg{\}}\\
&\leq \frac{4}{\epsilon},
\end{split}
\end{equation*}
because $-2 \leq h \leq 2$.
This gives
$$Q^2 \leq \frac{4}{\epsilon (1-\epsilon)}.$$
Since $\epsilon$ is arbitrary, we choose $\epsilon=\frac{1}{2}$ to get $Q^2 \leq 16$, Hence $Q \geq -4$. From \eqref{mu_bound}, we have $(4-6u)e^{-\bar{u}+\tilde{u}}\geq \mu >0$,  for $t \in [0, t^*]$, which gives
$$N_1 (t)= \frac{1}{4-6u}Q \geq \frac{-4}{4-6u} \geq \frac{-4e^{-\bar{u} +\tilde{u}}}{\mu} \geq \frac{-4e}{\mu},$$
where the last inequality follows from $0 \leq \bar{u} \leq 1$ and $0 \leq \tilde{u} \leq 1$. This completes the proof.
\end{proof}

Now, applying Lemma 3.3 in \cite{LL15} to \eqref{N_eq2} with \eqref{N_1_bound}, we obtain,
\begin{equation}
N(t) \geq \min\bigg{[} -\frac{4e}{\mu} , N(0)\bigg{]}=: \tilde{N_{0}}, \ \ \ for \ \ t \in [0, t^*].
\end{equation}
Substituting this lower bound into \eqref{M_eq}, we obtain
\begin{equation}
\begin{split}
\dot{M}&\geq -\big{\{} F_{11} M^2 + 2(F_{12}\bar{u}_x + F_{13}\tilde{u}_x )M + 2F_{23}\bar{u}_x \tilde{u}_x +F_{22}\bar{u}^2 _x + F_{33}\tilde{u}^2 _x \big{\}}-F_2 (N-M) -F_3(M-N)\\
&=-\big{\{} F_{11} M^2 + 2(F_{12}\bar{u}_x + F_{13}\tilde{u}_x )M + 2F_{23}\bar{u}_x \tilde{u}_x +F_{22}\bar{u}^2 _x + F_{33}\tilde{u}^2 _x \big{\}} -(F_3 -F_2)M + (F_3 -F_2)N\\
&\geq - F_{11}M^2 + (F_2 -F_3 -2F_{12}\bar{u}_x -2F_{13} \tilde{u}_x)M  - (2F_{23}\bar{u}_x \tilde{u}_x +F_{22}\bar{u}^2 _x + F_{33}\tilde{u}^2 _x) -(F_2 - F_3)\tilde{N_0}
\end{split}
\end{equation}
or
\begin{equation}\label{M_eq1}
\dot{M} \geq - F_{11}(M-M_1)(M-M_2) \ \ \ \ a.e.,
\end{equation}
for $t\in[0, t^*]$. Here,
\begin{equation}\label{M12}
\begin{split}
M_{1,2}:=&\frac{1}{-2F_{11}}\bigg{\{}  -(F_2 -F_3 -2F_{12}\bar{u}_x -2F_{13}\tilde{u}_x) \mp\\
 &\sqrt{(F_2 -F_3 -2F_{12}\bar{u}_x -2F_{13}\tilde{u}_x)^2-4F_{11}\big{(} 2F_{23}\bar{u}_x \tilde{u}_x +F_{22}\bar{u}^2 _x + F_{33}\tilde{u}^2 _x +(F_2 -F_3)\tilde{N}_0 \big{)}} \bigg{\}}.
\end{split}
\end{equation}
We note that $M_1 \leq 0 \leq M_2$ because
$$-4F_{11} >0, \ and \  \big{(} 2F_{23}\bar{u}_x \tilde{u}_x +F_{22}\bar{u}^2 _x + F_{33}\tilde{u}^2 _x +(F_2 -F_3)\tilde{N}_0 \big{)} \geq 0,$$
for $t\in[0, t^*]$. Indeed, $F_2 -F_3 \leq 0$, $\tilde{N}_0 <0$ and non-negativity of $2F_{23}\bar{u}_x \tilde{u}_x +F_{22}\bar{u}^2 _x  +F_{33}\tilde{u}^2 _x$ follows from \eqref{pos1}.

We claim that $M_2$ has an uniform upper bound, that is, $M_2 (t) \leq R_M$:
\begin{lemma}\label{lemma2.3}
$R_M$ holds
$$M_2(t) \leq \frac{e}{\mu} \bigg{[} 2+ \sqrt{4+ \frac{16}{27} \big{\{} 4 - 2 \tilde{N}_0 \big{\}} } \bigg{]}=:R_M.$$
\end{lemma}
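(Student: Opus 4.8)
The plan is to read $M_2$ off the explicit formula \eqref{M12}, replace its numerator and denominator by the uniform constants already gathered in Lemma \ref{lemma2.1} and in \eqref{fbd}--\eqref{mu_bound}, and then simplify algebraically to exactly $R_M$. Abbreviate $A:=F_2-F_3-2F_{12}\bar{u}_x-2F_{13}\tilde{u}_x$ and $B:=2F_{23}\bar{u}_x\tilde{u}_x+F_{22}\bar{u}^2_x+F_{33}\tilde{u}^2_x+(F_2-F_3)\tilde{N}_0$. Since $M_2$ is the $+$ branch in \eqref{M12}, since $-F_{11}=|F_{11}|\ge\mu>0$ on $[0,t^*]$ by \eqref{mu_bound}, and since $B\ge0$ (which is exactly \eqref{pos1} together with $F_2-F_3\le0$ and $\tilde{N}_0<0$, so that $-4F_{11}B=4|F_{11}|B\ge0$), I would write
\[
M_2=\frac{-A+\sqrt{A^2+4|F_{11}|B}}{2|F_{11}|}\le\frac{|A|+\sqrt{A^2+4|F_{11}|B}}{2\mu}.
\]
It then suffices to establish the three uniform bounds $|A|\le 4e$, $A^2\le16e^2$, and $4|F_{11}|B\le\tfrac{64e^2}{27}(4-2\tilde{N}_0)$; substituting them and pulling $16e^2$ out of the root turns the right-hand side into $\tfrac{2e}{\mu}\big[1+\sqrt{1+\tfrac4{27}(4-2\tilde{N}_0)}\big]$, which is precisely $R_M$ after rewriting $\sqrt{4+\tfrac{16}{27}(4-2\tilde{N}_0)}=2\sqrt{1+\tfrac4{27}(4-2\tilde{N}_0)}$.

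The estimate for $B$ is routine. Using the formulas for $F_{22},F_{33},F_{23},F_2,F_3$ and setting $h:=\tilde{u}_x-\bar{u}_x$, one checks $B=F\,(h^2-2\tilde{N}_0)$ with $F=u(1-u)^2e^{-\bar{u}+\tilde{u}}$. Then $F\le\tfrac{4e}{27}$ by \eqref{fbd}, $h^2\le4$ because $|h|\le2$ (from $-1\le\bar{u}_x,\tilde{u}_x\le1$), and $|F_{11}|=(4-6u)e^{-\bar{u}+\tilde{u}}\le4e$, so $4|F_{11}|B\le4\cdot4e\cdot\tfrac{4e}{27}(4-2\tilde{N}_0)=\tfrac{64e^2}{27}(4-2\tilde{N}_0)$. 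Note that I deliberately use $|F_{11}|\ge\mu$ in the denominator of the displayed bound and the separate, also valid, bound $|F_{11}|\le4e$ inside the square root.

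The crux is the bound on $A$. Writing $A=-2e^{-\bar{u}+\tilde{u}}\big[u(1-u)^2+(1-4u+3u^2)h\big]$ and using $e^{-\bar{u}+\tilde{u}}\le e$, the whole estimate reduces to the pointwise inequality $\big|u(1-u)^2+(1-4u+3u^2)h\big|\le2$ for $u\in[0,1]$ and $|h|\le2$, which yields $|A|\le4e$ and hence $A^2\le16e^2$. This is the step I expect to be the main obstacle: a crude triangle-inequality split gives only $\tfrac{4}{27}+2>2$, which is too weak to produce the clean constant, so one must exploit the cancellation between the two summands. Since the bracket is affine in $h$, its extrema over $|h|\le2$ are $u(1-u)^2\pm2|1-4u+3u^2|$; the lower bound $\ge-2$ is immediate from $|1-4u+3u^2|\le1$, while the upper bound $\le2$ reduces to $u(u^2+4u-7)\le0$ on $[0,\tfrac13]$ and to a one-variable estimate of the cubic $u^3-8u^2+9u-2$ on $[\tfrac13,1]$, whose maximum there lies well below $2$ (equality $=2$ occurring only at the degenerate point $u=0$, $|h|=2$). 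With $|A|\le4e$ in hand, the three bounds of the first paragraph are complete and the simplification there finishes the proof.
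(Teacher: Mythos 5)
Your proof is correct and follows essentially the same route as the paper: the paper likewise starts from the explicit root formula \eqref{M12}, uses $B=F\,(h^2-2\tilde{N}_0)$ with $F\le 4e/27$, divides by $-F_{11}\ge\mu$, and reduces everything to the same key polynomial bound --- your inequality $\big|u(1-u)^2+(1-4u+3u^2)h\big|\le 2$ is exactly the paper's claim $\max |P|\le 4$ for $P=2u(1-u)^2-2(1-4u+3u^2)(v-w)$. The only differences are bookkeeping (you carry the factors $e^{-\bar{u}+\tilde{u}}$ inside $A$, $B$, $F_{11}$ and bound each by $e$, whereas the paper cancels them and reinstates a single factor of $e$ at the end via $\tfrac{1}{4-6u}\le\tfrac{e}{\mu}$) and the fact that you actually verify, by the case analysis on $[0,\tfrac13]$ and $[\tfrac13,1]$, the polynomial maximum that the paper merely asserts.
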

\begin{proof}
We first expand and simplify $M_2$ in \eqref{M12}: 
\begin{equation*}
\begin{split}
M_2 &= \frac{1}{-2(-4 + 6u)} \bigg{[}  P + \sqrt{P^2 -4(-4 +6u) \big{\{}  u(1-u)^2 (v-w)^2 -2u(1-u)^2 \tilde{N_0} \big{\}} }\bigg{]}\\
&=\frac{1}{-2(-4 + 6u)} \bigg{[}  P + \sqrt{P^2 -4(-4 +6u)u(1-u)^2 \big{\{}   (v-w)^2 -2 \tilde{N_0} \big{\}} }\bigg{]}.
\end{split}
\end{equation*}
Here, 
$$v:=\bar{u}_x, \ \ w:=\tilde{u}_x, \ \ and \ \ P(u,v,w):=2u(1-u)^2 -2(1-4u +3u^2)(v-w).$$
Since $v,w \in [-1,1]$, and $u \in [0,1]$, we obtain that 
$$\max\{ P(u,v,w)  \}=4.$$
Thus,
$$M_2 (t) \leq \frac{1}{-2(-4 + 6u)} \bigg{[}  4 + \sqrt{4^2 +16\cdot \frac{4}{27} \big{\{}   2^2 -2 \tilde{N_0} \big{\}} }\bigg{]}.  $$
From \eqref{mu_bound}, we have that $(4-6u)e^{-\bar{u} + \tilde{u}}\geq \mu >0$,  for $t \in [0, t^*]$, which gives
\begin{equation*}
\begin{split}
M_2 (t) &\leq \frac{e^{-\bar{u} + \tilde{u}}}{2\mu}\bigg{[}  4 + \sqrt{4^2 +16\cdot \frac{4}{27} \big{\{}   2^2 -2 \tilde{N_0} \big{\}} }\bigg{]}\\
&\leq \frac{e}{\mu} \bigg{[} 2+ \sqrt{4+ \frac{16}{27} \big{\{} 4 - 2 \tilde{N}_0 \big{\}} } \bigg{]},
\end{split}
\end{equation*}
where the last inequality follows from the fact that $0 \leq \bar{u} \leq 1$ and $0 \leq \tilde{u} \leq 1$. This completes the proof.
\end{proof}

Now, by Lemma 3.3 in \cite{LL15}, if 
$$M(0)>R_{M},$$
then $M(t)$ will blow-up in a finite time. However,  we will need to find the blow-up condition of \eqref{M_eq1} that leads to the blow-up of $M$ \emph{before} $t^*$, which defined in \eqref{t_star_2}, because \eqref{M_eq1} may not be valid after $t^*$. Finding the ``before $t^*$" blow-up condition of \eqref{M_eq1} and blow-up time estimation are carried out by comparison with the following equation:
\begin{equation}\label{alpha_eq} 
\frac{d \beta}{dt} = \mu \beta(t)(\beta(t) -R_{M}).
\end{equation}
\begin{lemma}(Lemma 3.2 in \cite{L19})\label{comp_lemma}
$$R_{M}\leq \beta(0) < M(0) \ implies \ that \ R_{M}\leq \beta(t) < M(t),$$
$\forall t \in [0, t^*]$.
\end{lemma}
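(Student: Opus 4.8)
The plan is to run a differential-inequality comparison between $M(t)$, which satisfies \eqref{M_eq1} on $[0,t^*]$, and the solution $\beta(t)$ of the autonomous ODE \eqref{alpha_eq}. The first step is to record the behaviour of $\beta$ by itself. Since $R_M>0$ is an equilibrium of \eqref{alpha_eq} and $\mu\beta(\beta-R_M)>0$ whenever $\beta>R_M$, the hypothesis $\beta(0)\geq R_M$ forces $\beta$ to be non-decreasing and to remain at or above $R_M$ for as long as it exists; in particular $\beta(t)\geq R_M$ on $[0,t^*]$, which already settles the lower bound asserted in the statement.

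The heart of the argument is the pointwise inequality that turns \eqref{M_eq1} into a differential inequality whose right-hand side is \emph{exactly} that of \eqref{alpha_eq}. Recall from \eqref{mu_bound} that $-F_{11}\geq \mu>0$ on $[0,t^*]$, and from the sign discussion following \eqref{M12} together with Lemma \ref{lemma2.3} that $M_1\leq 0\leq M_2\leq R_M$. Hence, whenever $M\geq R_M$, the three factors $-F_{11}$, $M-M_1\geq M$, and $M-M_2\geq M-R_M$ are all non-negative, so that
\begin{equation*}
-F_{11}(M-M_1)(M-M_2)\geq \mu\,M(M-R_M).
\end{equation*}
Combined with \eqref{M_eq1}, this yields $\dot M\geq \mu M(M-R_M)$ almost everywhere on any interval where $M\geq R_M$.

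I would then close the argument with a barrier/Gronwall comparison. Set $t_0:=\sup\{t\in[0,t^*]:M(s)>\beta(s)\text{ on }[0,t]\}$ and suppose, for contradiction, that $t_0<t^*$, so that $M(t_0)=\beta(t_0)$ while $M>\beta\geq R_M$ on $[0,t_0)$. On $[0,t_0]$ both $\dot M\geq \mu M(M-R_M)$ and $\dot\beta=\mu\beta(\beta-R_M)$ hold; subtracting and using the local Lipschitz bound $L$ of $y\mapsto \mu y(y-R_M)$ on the bounded range of $M,\beta$ (both continuous on the compact interval) gives $\tfrac{d}{dt}(M-\beta)\geq -L(M-\beta)$ a.e. Since $M-\beta$ is absolutely continuous, $e^{Lt}(M-\beta)$ is non-decreasing, whence $(M-\beta)(t_0)\geq (M-\beta)(0)>0$, contradicting $M(t_0)=\beta(t_0)$. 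Therefore $M(t)>\beta(t)$ on all of $[0,t^*]$.

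The main obstacle is this last step rather than the algebra: because $t\mapsto\xi(t)$ need not be smooth, $M$ is only Lipschitz and \eqref{M_eq1} holds merely almost everywhere, so the comparison must be phrased through the absolute continuity of $M-\beta$ and a Gronwall estimate rather than a naive inspection of derivatives at a first crossing time. Care is also needed to preserve the \emph{strict} inequality (the hypothesis $\beta(0)<M(0)$ enters precisely here) and to guarantee that $M$ stays above $R_M$ on $[0,t_0]$, which is automatic because $M>\beta\geq R_M$ there and is exactly what licenses the pointwise estimate of the second paragraph.
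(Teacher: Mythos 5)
Your proposal is correct, but note that the paper itself offers no proof of this lemma at all: it is quoted verbatim as Lemma 3.2 of \cite{L19}, so the ``paper's proof'' is a citation. What you have written is a sound self-contained reconstruction, and it is essentially the argument behind the cited lemma: the two key ingredients are exactly the ones needed, namely (i) the pointwise bound $-F_{11}(M-M_1)(M-M_2)\geq \mu M(M-R_M)$ valid on $\{M\geq R_M\}\cap[0,t^*]$, which correctly combines \eqref{mu_bound}, the sign information $M_1\leq 0\leq M_2$ following \eqref{M12}, and the bound $M_2\leq R_M$ of Lemma \ref{lemma2.3}; and (ii) a first-crossing argument that respects the fact that \eqref{M_eq1} only holds almost everywhere for the Lipschitz function $M$ (the point the paper defers to \cite{CE98}). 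One simplification worth knowing: after subtracting \eqref{alpha_eq} from the differential inequality you can factor
\begin{equation*}
\mu M(M-R_M)-\mu\beta(\beta-R_M)=\mu\,(M-\beta)(M+\beta-R_M),
\end{equation*}
and since $M+\beta-R_M\geq R_M>0$ on the interval in question, $M-\beta$ is non-decreasing there; this yields the contradiction at $t_0$ directly, with no need for the generic Lipschitz constant $L$ and the integrating factor $e^{Lt}$. Finally, a small caveat you inherit from the lemma as stated rather than introduce yourself: when $\beta(0)>R_M$ the solution of \eqref{alpha_eq} may blow up strictly before $t^*$, so the conclusion should be read as holding on the common interval of existence; your first-crossing argument does exactly that, and the blow-up of $\beta$ then forces the blow-up of $M$, which is how the lemma is used in the proof of Theorem \ref{thm_traffic}.
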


Let us consider finite time blow-up of $\beta(t)$ in \eqref{alpha_eq}. By solving \eqref{alpha_eq}  directly, we obtain 
\begin{lemma}\label{bt_lemma} (Lemma 2.4 in \cite{LL15-1})
If $\beta(0) >R_M$ in \eqref{alpha_eq}, then
$$\beta(t) \rightarrow \infty \ \ as \ \ t \rightarrow \frac{1}{\mu R_M}\log \bigg{(} \frac{\beta(0)}{\beta(0) -R_M} \bigg{)}. $$
\end{lemma}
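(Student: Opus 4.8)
The plan is to solve the scalar autonomous ODE \eqref{alpha_eq} in closed form by separation of variables: since $\frac{d\beta}{dt} = \mu\beta(\beta - R_M)$ is a Riccati (indeed logistic-type) equation with constant coefficients, an explicit formula for $\beta(t)$ is available, and the blow-up time can then be read off directly. First I would record the qualitative behavior that makes the separation legitimate. Because $\beta(0) > R_M > 0$ and the right-hand side $\mu\beta(\beta - R_M)$ is strictly positive whenever $\beta > R_M$, the solution is strictly increasing and stays above $R_M$ for as long as it exists. Hence on the maximal interval of existence the factor $\beta(\beta - R_M)$ never vanishes, the trajectory does not cross the equilibria $0$ and $R_M$, and dividing by it is justified.

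Next I would separate variables and use the partial-fraction decomposition
$$\frac{1}{\beta(\beta - R_M)} = \frac{1}{R_M}\left(\frac{1}{\beta - R_M} - \frac{1}{\beta}\right),$$
so that integrating \eqref{alpha_eq} over $[0,t]$ yields
$$\frac{1}{R_M}\log\left(\frac{\beta(t) - R_M}{\beta(t)}\right) = \mu t + \frac{1}{R_M}\log\left(\frac{\beta(0) - R_M}{\beta(0)}\right),$$
where the arguments of the logarithms are positive and lie in $(0,1)$ thanks to the monotonicity just noted. Exponentiating and solving the resulting linear relation for $\beta(t)$ gives the explicit solution
$$\beta(t) = R_M\left[1 - \frac{\beta(0) - R_M}{\beta(0)}\,e^{\mu R_M t}\right]^{-1}.$$

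Finally I would locate the blow-up instant as the value of $t$ at which the bracketed denominator vanishes. Setting $1 - \frac{\beta(0) - R_M}{\beta(0)}e^{\mu R_M t} = 0$ and solving gives $t = \frac{1}{\mu R_M}\log\left(\frac{\beta(0)}{\beta(0) - R_M}\right)$; as $t$ increases to this value from below, the denominator decreases monotonically to $0^+$ while $\beta(t)$ increases to $+\infty$, which is precisely the claimed conclusion. I do not anticipate a genuine obstacle here: the only points requiring care are the sign bookkeeping inside the logarithms (dispatched by the monotonicity observation) and verifying that the denominator stays strictly positive on the interval up to the blow-up time so that the explicit formula remains valid. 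Since this is the content of the cited Lemma 2.4 in \cite{LL15-1}, the argument is routine and the statement follows at once from the closed-form solution.
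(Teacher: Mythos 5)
Your proposal is correct and is essentially the paper's own approach: the paper does not write out a proof, citing Lemma 2.4 of \cite{LL15-1} and noting only that the result follows ``by solving \eqref{alpha_eq} directly,'' which is exactly the separation-of-variables computation you carry out. Your explicit solution $\beta(t) = R_M\bigl[1 - \tfrac{\beta(0)-R_M}{\beta(0)}e^{\mu R_M t}\bigr]^{-1}$ and the resulting blow-up time agree with the lemma's statement, and your monotonicity remark correctly justifies the sign bookkeeping.
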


The last step of proving Theorem \ref{thm_traffic} is to combine the comparison principle in Lemma \ref{comp_lemma} with Lemma \ref{bt_lemma}. We let $\kappa=\mu R_M t^* >0$. Then for any given initial data
\begin{equation}\label{thm_condition}
M(0) > R_M \cdot \frac{e^\kappa}{e^\kappa -1} >R_M,
\end{equation}
we can always find the initial data $\beta(0)$ for \eqref{alpha_eq} such that
$$\beta(0) \in \bigg{[} \frac{R_M e^\kappa}{e^\kappa -1} ,M(0) \bigg{)}.$$
Since $\beta(0) \geq  \frac{R_M e^\kappa}{e^\kappa -1} > R_{M}$, $\beta(0)$ will lead to finite time blow-up of $\beta(t)$, which in turn implies finite time blow-up of $M(t)$ by Lemma \ref{comp_lemma}.  We notice that the finite time blow-up occur before 
\begin{equation}\label{t_star}
t^*= \frac{1}{\alpha} \bigg{(} \min[-F_{11}(0, \xi(0)) , -F_{11} (0, \eta(0))] -\mu\bigg{)}.
\end{equation}
Indeed, the blow-up time in Lemma \ref{bt_lemma} can be estimated as follows. Since $\beta(0)\geq R_M e^{\kappa}/(e^\kappa -1)$,
\begin{equation}
\begin{split}
\frac{1}{\mu R_{M}} \log\bigg{(} \frac{\beta(0)}{\beta(0) - R_{M}} \bigg{)} &\leq \frac{1}{\mu R_{M}} \log\bigg{(} \frac{ R_M e^{\kappa}/(e^\kappa -1) }{R_M e^{\kappa}/(e^\kappa -1)  - R_{M}} \bigg{)}\\
&=\frac{1}{\mu R_{M}} \kappa=t^*.
\end{split}
\end{equation}
We now have $\beta(t) \rightarrow \infty$ before $t^*$, which in turn by Lemma \ref{comp_lemma}, implies that $M(t)\rightarrow \infty$ in finite time.  \textcolor{black}{By expanding \eqref{thm_condition}, we obtain the condition in Theorem \ref{thm_traffic}. Indeed,
\begin{equation*}
\begin{split}
M(0) &> R_M \cdot \frac{e^{\kappa}}{e^{\kappa} -1}\\
&=R_M (1-e^{-\kappa})^{-1}\\
&=R_M (1-e^{-\mu R_M t^{*}})^{-1}.
\end{split}
\end{equation*}
By substituting the $R_M$ expression in Lemma \ref{lemma2.3} and $t^*$ in \eqref{t_star} into the above expression, respectively, we obtain \eqref{traffic_threshold} in Theorem \ref{thm_traffic}. }
This completes the proof of Theorem \ref{thm_traffic}.

\section{Appendix}
\subsection{Notation table}
In this sub-section, we display the table which includes most of notations introduced in the paper are defined.

\begin{center}

\begin{tabular}{||>{\color{black}}c | >{\color{black}}m{30em}  ||} 
 \hline
 \textbf{Symbol} & \textbf{Expression and/or the place where it is introduced in}   \\ [0.5ex] 
 \hline\hline
  $d$& derivative of $u(t,x)$ with respect to $x$, Page 3\\ 
\hline
 $F_i$ and $F_{ij}$& derivatives of $F(u, \bar{u}, \tilde{u})$ with respect to its argument(s), Page 3\\ 
\hline
$\bar{u}(t,x)$& $\int^{x+1} _x u(t,y) dy$, in Lemma \ref{lemma2.1}, Page 4\\ 
\hline
$\tilde{u}(t,x)$& $\int^{x} _{x-1} u(t,y) dy$, in  Lemma \ref{lemma2.1}, Page 4\\ 
\hline
$M(t)$& $\sup_{x\in \mathbb{R}}[u_x (t,x)]$, attained along $\xi(t)$, Page 5\\ 
\hline
$N(t)$& $\inf_{x\in \mathbb{R}}[u_x (t,x)]$, attained along $\eta(t)$,  Page 5\\ 
\hline
$\alpha$& $\frac{248e^2}{27}$,  Page 5\\ 
\hline
$\mu$ &   Page 5\\ 
\hline
$t^*$ & $\frac{1}{\alpha}\big{(}  \min[-F_{11}(0, \xi(0)), -F_{11}(0, \eta(0))] -\mu \big{)}$, equation \eqref{t_star_2},  Page 5\\ 
\hline
$N_1 (t)$ and $N_2 (t)$ & Roots of the quadratic equation in \eqref{N_eq}  or  \eqref{N_eq2}, Page 6\\ 
\hline
$M_1 (t)$ and $M_2 (t)$ & Roots of the quadratic equation in \eqref{M_eq}  or \eqref{M_eq1}, Page 7\\ 
\hline
$\tilde{N_{0}}$ & $\min\big{[} -\frac{4e}{\mu} , N(0)\big{]}$, Page 7\\ 
 \hline
  $R_M$ & upper bound of $M_2 (t)$,  Lemma \ref{lemma2.3}, Page 8\\ 
\hline
 $\kappa$ & $\mu R_M t^* $,  Lemma \ref{bt_lemma}, Page 9\\ 

 \hline
\end{tabular}
\captionof{table}{\textcolor{black}{Notation table}}\label{table1}
\end{center}

\textcolor{black}{
\subsection{Discussions on general parameters} 
In this subsection, we briefly sketch the proof of the theorem in the case of general parameters $K_a, K_b, \gamma_a$ and $\gamma_b$, as well as their effects on the blow-up condition. From
$$\bar{u}(t,x)=\frac{1}{\gamma_a}\int^{x+\gamma_a} _x K_a u(t,y) \, dy  \  \ and \ \ \tilde{u}(t,x)=\frac{1}{\gamma_b}\int^{x} _{x-\gamma_b} K_b u(t,y) \, dy,$$
we obtain
$$0\leq \bar{u} \leq K_a \ \ and \ \ 0\leq \tilde{u} \leq K_b,$$
respectively.
Since $F(u, \bar{u}, \tilde{u}) = u(1-u)^2 e^{-\bar{u} + \tilde{u}}$, and $0\leq u \leq 1$, the above bounds give
\begin{equation}\label{appen_F_bound}
0 \leq F(u, \bar{u}, \tilde{u}) \leq \frac{4}{27}e^{K_b}.
\end{equation}
We note that the above estimate corresponds to \eqref{fbd}. Also, from
$$\bar{u}_x = \frac{K_1}{\gamma_a} \big{(} u(x+\gamma_a) -u(x) \big{)} \  \ and \ \ \tilde{u}_x = \frac{K_b}{\gamma_b} \big{(} u(x)- u(x-\gamma_b) ),$$
$$-\frac{K_a}{\gamma_a} \leq \bar{u}_x \leq \frac{K_a}{\gamma_a} \ \ and \ \ -\frac{K_b}{\gamma_b} \leq \tilde{u}_x \leq \frac{K_b}{\gamma_b}$$
are derived.
Finally, since
$$\bar{u}_t = \partial_t \bigg{(} \frac{1}{\gamma_a}\int^{x+\gamma_a} _{x} K_a u(t,y) \, dy  \bigg{)} = \frac{K_a}{\gamma_a}\int^{x+\gamma_a} _{x} - \partial_y F(u, \bar{u}, \tilde{u}) \, dy = - \frac{K_a}{\gamma_a}F(u, \bar{u}, \tilde{u}) \bigg{|}^{y=x+\gamma_a} _{y=x},$$
\eqref{appen_F_bound} gives
$$-\frac{4K_a}{27\gamma_a}e^{K_a} \leq \bar{u}_t \leq \frac{4K_a}{27\gamma_a}e^{K_b}.$$
Similarly, it holds
$$-\frac{4K_b}{27\gamma_b}e^{K_b} \leq \tilde{u}_t \leq \frac{4K_b}{27\gamma_b}e^{K_b}.$$
For the estimation of $\dot{F}_{11} =(\partial_t + F_1 \partial_x)F_{11}$, we reuse the expansion in the proof of Lemma \ref{lemma2.1}, then apply the uniform bounds of $u$, $\bar{u}$, $\tilde{u}$, $\bar{u}_x$, $\tilde{u}_x$, $\bar{u}_t$,  and $\tilde{u}_t$, derived above. Indeed,
\begin{equation*}
\begin{split}
\dot{F}_{11} &= (\partial_t + F_1 \partial_x)F_{11}\\
&=e^{-\bar{u} +\tilde{u}} \bigg{\{} (\bar{u}_x +\tilde{u}_x ) e^{-\bar{u} +\tilde{u}} \big{(}  -4\cdot (3u^3 -6u^2 +4u -1)  \big{)} + (\bar{u}_t - \tilde{u}_t)(4-6u) \bigg{\}}\\
\end{split}
\end{equation*}
implies
\begin{equation}\label{general}
\begin{split}
|\dot{F}_{11}|&\leq e^{K_b} \bigg{|}\big{(} \frac{K_a}{\gamma_a} + \frac{K_b}{\gamma_b} \big{)}e^{K_b} \big{(}  -4\cdot (3u^3 -6u^2 +4u -1)  \big{)}
+\big{(} \frac{4K_a}{27\gamma_a}e^{K_b} + \frac{4K_b}{27\gamma_b}e^{K_b} \big{)}\cdot 4 \bigg{|}.\\
&\leq e^{2K_b} \frac{124}{27} \bigg{(} \frac{K_a}{\gamma_a} + \frac{K_b}{\gamma_b} \bigg{)}
\end{split}
\end{equation}
where the last inequality holds due to  $Range[ -4\cdot (3u^3 -6u^2 +4u -1)  ] = [0,4]$, for $0\leq u \leq 1$. We note that if $K_a = K_b=\gamma_a = \gamma_b =1$, then the above estimate is reduced to \eqref{f11dotbound} in Lemma \ref{lemma2.1}. The remaining steps are straightforward, so the detail will be left to the readers.
}
\textcolor{black}{
In the proof of the main theorem, we set $\alpha$ as the bound of $\dot{F}_{11}$ (i.e., $\alpha=\frac{248}{27}e^2$, when $K_a = K_b=\gamma_a = \gamma_b =1$). In the statement of the main theorem, one can see that $\alpha$ acts as a blow-up threshold amplifying term.  Indeed, consider the blow-up threshold in the main theorem,
$$G(\mu, \lambda(\mu, m), \xi, \eta):= \lambda(\mu, m) \cdot \bigg{[}1- \exp \bigg{(}-\mu \lambda(\mu, m) \underbrace{\frac{27}{248e^2} }_{=1/\alpha}(\min[L^{u_0}(\xi), L^{u_0}(\eta)]  -\mu) \bigg{)}  \bigg{]}^{-1}. $$
We see that $G$ is increasing in $\alpha$. Thus, heuristically speaking, the bigger $\alpha$ means the less chance of blow-up.
}

\textcolor{black}{
Now, we let $\alpha$ be the bound in \eqref{general}, that is 
$$\alpha:= e^{2K_b} \frac{124}{27} \bigg{(} \frac{K_a}{\gamma_a} + \frac{K_b}{\gamma_b} \bigg{)}.$$
Following the previous interpretation regarding the effect of $\alpha$ on the blow-up threshold, we observe that  the higher values of $K_a$ and $K_b$ (correspond to stronger interaction  between vehicles) increase the blow-up threshold,  whereas the higher values of $\gamma_a$ and $\gamma_b$ (correspond to longer look-ahead and behind distance, respectively) suppress the blow-up threshold. One may interpret this as follows:  a group of drivers who carefully react to nearby vehicles experience less traffic jam, on the other hand, a group of drivers who take into account too long distance (thus putting relatively less attention to nearby vehicles) may experience more traffic jam. 
 }



\bibliographystyle{abbrv}

\begin{thebibliography}{10}


\bibitem{AC2015}
P. Amorim, R. M. Colombo and A. Teixeira.
\newblock On the numerical integration of scalar nonlocal conservation laws.
\newblock  {\em ESAIM: Mathematical Modelling and Numerical Analysis.}, 49: 19--37, 2015.


\bibitem{CE98}
A. Constantin and J. Escher.
\newblock Wave breaking for nonlinear nonlocal shallow water equations.
\newblock  {\em Acta Math.}, 181: 229--243, 1998.

\bibitem{FV2020}
O. Farotimi  and  K. Vajravelu.
\newblock Formulation of a maximum principle satisfying a numerical scheme for traffic flow models.
\newblock  {\em SN Partial Differential Equations and Applications.}, 1(20), 2020.

\bibitem{GS2016}
P. Goatin and S. Scialanga.
\newblock Well-posedness and finite volume approximations of the LWR traffic flow model with non-local velocity.
\newblock  {\em Networks and Heterogeneous Media}, 11(1): 107--121, 2016.


\bibitem{KTP2021}
I. Karafyllis, D. Theodosis and M. Papageorgiou. 
\newblock Nonlinear adaptive cruise control of vehicular platoons. 
\newblock {\em International Journal of Control}, 95(3):  1--23, 2021.

\bibitem{KP2017}
A. Keimer and L. Pflug.
\newblock Existence, uniqueness and regularity results on nonlocal balance laws.
\newblock  {\em Journal of Differential Equations.},   263: 4023--4069, 2017.

\bibitem{KPS2017}
A. Keimer, L. Pflug and Michele Spinola.
\newblock Nonlocal scalar conservation laws on bounded domains and applications in traffic flow.
\newblock  {\em SIAM Journal of Mathematical Analysis.},   50(6): 6271--6306, 2018.

\bibitem{KP09}
A. Kurganov and A. Polizzi.
\newblock Non-oscillatory central schemes for traffic flow models with Arrhenius look-ahead dynamics.
\newblock  {\em Netw. Heterog. Media.},   4: 431--451, 2009.


\bibitem{LL15}
Y. ~Lee and  H. Liu.
\newblock Thresholds for shock formation in traffic flow models with Arrhenius look-ahead dynamics.
\newblock  {\em  DCDS-A},  35(1): 323--339, 2015.

\bibitem{LL15-1}
Y. ~Lee and  H. Liu.
\newblock Threshold for shock formation in the hyperbolic Keller-Segel model.
\newblock  {\em  Appl. Math. Lett.},  50: 56--63, 2015.

\bibitem{L19}
Y. ~Lee.
\newblock Thresholds for shock formation in traffic flow models with nonlocal-concave-convex flux.
\newblock  {\em  Journal of Differential Equations},  266(1): 580--599, 2019.

\bibitem{L20}
Y. ~Lee.
\newblock Traffic flow models with nonlocal looking ahead-behind dynamics.
\newblock  {\em  Journal of the Korean Mathematical Society},  57(4): 987--1004, 2020.


\bibitem{LT20}
Y. ~Lee and C. Tan.
\newblock A sharp critical threshold for a traffic flow model with look-ahead dynamics.
\newblock  {\em  Communications in Mathematical Sciences},  20(4): 1151-1172, 2022.


\bibitem{LL11}
D. Li, and T. Li.
\newblock Shock formation in a traffic flow model with Arrhenius look-ahead dynamics.
\newblock  {\em Netw. Heterog. Media.}, 6: 681--694, 2011.

\bibitem{LZ11}
J. Li, and H. M. Zhang.
\newblock Fundamental diagram of traffic flow.
\newblock {\em Transp. Res. Rec.},  2260: 50--59, 2011.

\bibitem{LW55}
M. J. Lighthill and G. B. Whitham. 
\newblock On kinematic waves: II. A theory of traffic flow on long crowded roads.
\newblock  {\em  Proc. Roy. Soc., London, Ser. A}, 229: 317--345, 1955.

\bibitem{NS94}
P. I. Naumkin and I. A. Shishmar{\"e}v. 
\newblock Nonlinear nonlocal equations in the theory of waves.
\newblock  {\em  Translations of Mathematical Monographs,} Translated from the Russian manuscript by Boris
Gommerstadt, 133: 1994.




\bibitem{PM2021}
M. Papageorgiou, K. Mountakis, I. Karafyllis, I. Papamichail and Y. Wang
\newblock Lane-Free Artificial-Fluid Concept for Vehicular Traffic.
\newblock  {\em   Proceedings of the IEEE}, 109(2): 114--121, 2021.

\bibitem{R56}
P. I. Richards. 
\newblock Shock waves on the highway.
\newblock  {\em Oper. Res}, 4: 42--51, 1956.

\bibitem{SK06}
A. Sopasakis and M. Katsoulakis.
\newblock Stochastic modeling and simulation of traffic flow: Asymmetric single exclusion process with Arrhenius look-ahead dynamics.
\newblock {\em SIAM J. Appl. Math.},  66(3): 921--944, 2006.

\bibitem{SCM20}
J. A. Secrest, J. M. Conroy and H. G. Miller.
\newblock A unified view of transport equations.
\newblock {\em Physica A: Statistical Mechanics and its Applications.},  547(1), 2020.

\bibitem{S68} 
\newblock R. Seliger,
\newblock A note on the breaking of waves,
\newblock \emph{Proc. Roy. Soc. Ser. A}, 303: 493--496, 1968. 

\bibitem{ST20}
Y. Sun and C. Tan.
\newblock On a class of new nonlocal traffic flow models with look-ahead rules.
\newblock {\em Physica D: Nonlinear Phenomena.},  413, 2020.

\bibitem{GW74}
G. B. Whitham.
\newblock Linear and nonlinear waves.
\newblock  {\em John Wiley and Sons}, 1974.




\end{thebibliography}

\end{document}